\numberwithin{equation}{section}
\newcommand{\Z}{\mathbb{Z}}
\newcommand{\N}{\mathbb{N}}
\newcommand{\R}{\mathbb{R}}
\newcommand{\eps}{\varepsilon}
\renewcommand{\phi}{\varphi}
\newtheorem{Thm}{Theorem}[section]
\newtheorem{theorem}[Thm]{Theorem}
\newtheorem{corollary}[Thm]{Corollary}
\newtheorem{lemma}[Thm]{Lemma}
\newtheorem{proposition}[Thm]{Proposition}
\newtheorem{remark}[Thm]{Remark}
\newtheorem{question}[Thm]{Question}
\newtheorem{definition}{Definition}
\begin{document}
\sloppy

\title[Fekete's lemma in Banach spaces]{Fekete's lemma in Banach spaces}
\author{Aleksei Kulikov}
\address{University of Copenhagen, Department of Mathematical Sciences,
Universitetsparken 5, 2100 Copenhagen, Denmark,
\newline {\tt lyosha.kulikov@mail.ru} 
}
\author{Feng Shao}
\address{Peking University, School of Mathematical Sciences, Beijing, 100871,  China,
\newline {\tt fshao@stu.pku.edu.cn}
}
%\thanks{2010 {\it Mathematics Subject Classification:} Primary 42A16; Secondary 42A20, 42C05}
%\thanks{{\it Key words and phrases:}  prolate spheroidal wave functions, Bargmann--Segal--Fock space, Hermite functions}
\begin{abstract} { For a sequence of vectors $\{v_n\}_{n\in \mathbb{N}}$ in the uniformly convex Banach space $X$ which for all $n, m\in \mathbb{N}$ satisfy $\|v_{n+m}\|\le \|v_n + v_m\|$ we show the existence of the limit $\lim_{n\to \infty} \frac{v_n}{n}$. This extends the classical Fekete's subadditivite lemma to Banach space-valued sequences.
}
\end{abstract}
\maketitle
\section{Introduction}

The classical Fekete's subadditivite lemma \cite{Fekete} says that for a sequence $\{a_n\}_{n\in\N}\subset \R$ such that $a_{n+m} \le a_n + a_m$ for all $n, m\in \N$
there exists a limit $\lim_{n\to \infty} \frac{a_n}{n}\in [-\infty, \infty)$ and moreover that this limit is equal to the infimum $\inf_{n\in \N} \frac{a_n}{n}$ (in this paper, $\N=\Z\cap[1,+\infty)$). This is a beautiful result which has applications in a wide range of mathematical areas such as combinatorics \cite{Lueker,Shur} and functional analysis, in particular giving a very simple proof for the existence of the spectral radius $\lim_{n\to \infty} \|A^n\|^{1/n}$ for a bounded operator $A$, see \cite[Excercise 6.23]{Brezis}.

There are also quite a few generalizations of the Fekete's lemma, such as relaxing the subadditivity condition \cite[Theorem 22 and Theorem 23]{Bruijn_Erdos}, see also \cite[Theorem 1.9.1 and Theorem 1.9.2]{Steele} and \cite{Ruzsa_Furedi} for some recent discussions; or changing the domain of the sequence from $\N$ to $\R_+$ or $\R$ {or even $\R^N$}, see \cite[Section 7.6 and Section 7.13]{Hille_Phillips}. However, in all those cases the range of values was still $\R$. In this paper, instead, we will study what happens if the range is in some Banach space $X$. We begin by recalling the definitions of  convex and uniformly convex Banach spaces.
\begin{definition}
A Banach space $(X,\|\cdot\|)$ is said to be convex if for all $u, v\in X, u\neq v$ with $\|u\| = \|v\| = 1$ we have $\|u+v\| < 2$.
\end{definition}
\begin{definition}
A Banach space $(X,\|\cdot\|)$ is said to be uniformly convex if for any $\eps > 0$ there exists $\delta > 0$ such that for all $u, v\in X$ with $\|u\| = \|v\| = 1, \|u-v\| \ge \eps$ we have $\|u+v\| \le 2-\delta$.
\end{definition}

For finite-dimensional Banach spaces these two notions are equivalent by compactness, but there are infinite-dimensional convex Banach spaces which are not uniformly convex.

The main result of this paper is the following theorem.

\begin{theorem}\label{main}
Let $\{v_n\}_{n\in\N}$ be a sequence of vectors in a uniformly convex Banach space $(X,\|\cdot\|)$. Assume that for all $n, m\in \N$ we have $\|v_{n+m}\| \le \|v_n + v_m\|$. Then there exists a limit $\lim_{n\to \infty} \frac{v_n}{n}$. 
\end{theorem}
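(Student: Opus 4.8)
The plan is to reduce the vector statement to a scalar one and then upgrade it using uniform convexity. Since $\|v_{n+m}\| \le \|v_n + v_m\| \le \|v_n\| + \|v_m\|$, the scalar sequence $a_n := \|v_n\|$ is subadditive, so the classical Fekete lemma applies and yields $\frac{\|v_n\|}{n} \to L := \inf_n \frac{\|v_n\|}{n} \in [0,\infty)$. If $L = 0$ then $\frac{v_n}{n} \to 0$ and we are done, so from now on I assume $L > 0$. I will write $s_n := \|v_n\| - Ln$, a nonnegative subadditive sequence with $s_n = o(n)$ (nonnegativity because $L$ is the infimum), and I set $z_n := \frac{v_n}{n}$, so that $\|z_n\| \to L$.

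The key local estimate comes from the fact that the hypothesis forces the triangle inequality to be almost sharp. Indeed $\|v_n + v_m\| \ge \|v_{n+m}\|$ together with $\|v_{n+m}\| = L(n+m) + s_{n+m}$ gives
\[
0 \le \bigl(\|v_n\| + \|v_m\|\bigr) - \|v_n + v_m\| \le s_n + s_m - s_{n+m}.
\]
Writing $\frac{v_n+v_m}{n+m}$ as the convex combination $\lambda z_n + (1-\lambda) z_m$ with $\lambda = \frac{n}{n+m}$, the inequality $\|v_{n+m}\| \le \|v_n+v_m\|$ reads
\[
\|z_{n+m}\| \le \|\lambda z_n + (1-\lambda) z_m\| \le \lambda \|z_n\| + (1-\lambda)\|z_m\|.
\]
As $n,m \to \infty$ the outer terms both tend to $L$, so the convex combination has norm tending to $L$ while its two endpoints have norm tending to $L$ as well; by uniform convexity this forces $\|z_n - z_m\| \to 0$ — but only while $\lambda$ stays bounded away from $0$ and $1$, i.e. only for indices $n,m$ of comparable size.

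I therefore expect the main obstacle to be the passage from comparable indices to arbitrary ones. The naive chaining $n \to 2n \to 4n \to \cdots \to m$ accumulates roughly $\log(m/n)$ small rotations, and control of comparable pairs alone does \emph{not} force convergence: a slowly rotating sequence such as $z_n=(\cos\sqrt{\log n},\,\sin\sqrt{\log n})$ in the plane keeps comparable terms close yet never converges. What rules this out here is that our hypothesis is an \emph{exact} inequality — in the rotating example subadditivity fails by a tiny amount at \emph{every} pair — and the defect $s_n + s_m - s_{n+m}$ that governs the admissible rotation is itself a nonnegative subadditive quantity of size $o(n)$, so the cumulative directional drift across all scales must be controlled rather than merely summed.

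To organize the endgame I would use that a uniformly convex space is reflexive (Milman--Pettis) and has the Radon--Riesz / Kadec--Klee property: weak convergence together with convergence of norms implies strong convergence. The bounded sequence $z_n$ then has weak cluster points, every such cluster point $z$ satisfies $\|z\| \le L$ by weak lower semicontinuity of the norm, and since $\|z_n\| \to L$ the whole problem reduces to showing that there is a \emph{unique} weak cluster point and that it has norm exactly $L$; once this is in hand, Kadec--Klee upgrades weak to strong convergence of the entire sequence $z_n = \frac{v_n}{n}$. The hard part is precisely this uniqueness (equivalently, the exclusion of ``spreading'' of the directions across widely separated scales), and it is here that one must exploit the exact inequality $\|v_{n+m}\| \le \|v_n + v_m\|$ for far-apart pairs rather than just for comparable ones, quantitatively through the modulus of convexity applied to the defect $s_n$.
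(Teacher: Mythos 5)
Your first half is sound and is essentially the paper's Proposition \ref{prop}: for indices of comparable size the normalized vectors must be close, since otherwise uniform convexity makes the defect in $\|v_n+v_m\|\ge\|v_{n+m}\|$ a positive proportion of $n$, incompatible with $\|v_k\|=k(L+o(1))$ and $L>0$. You also correctly diagnose that doubling-chains $n\to 2n\to\cdots\to m$ cannot work, and your rotating example is exactly the phenomenon behind the paper's counterexample to the de Bruijn--Erd\H{o}s variant in Section \ref{Sec.BE}. But the proposal then stops. Invoking Milman--Pettis and the Kadec--Klee property does not advance the argument: proving that the weak cluster point is unique and has norm exactly $L$ \emph{is} the convergence statement, and you say yourself that ``the hard part is precisely this uniqueness.'' So the far-apart case --- the actual content of the theorem beyond the comparable-scale observation --- is never proved; this is a genuine gap, not a presentational one.

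The idea you are missing is to chain \emph{additively with fixed increment $n$} rather than multiplicatively. Normalize $L=1$, suppose $m>4n$ with both large, and suppose the directions of $v_n$ and $v_m$ are $2\eps$-separated. Consider $v_{m+n}, v_{m+2n},\dots,v_{m+kn}$, where $k$ is the smallest integer with $kn\ge m$. For every $r\le k$ we have $m\le m+rn\le 2m$, so by the comparable-index result the direction of $v_{m+rn}$ stays $\eps$-close to that of $v_m$, hence $\eps$-far from that of $v_n$. A quantitative form of uniform convexity (the paper's Lemma \ref{lem}: if the directions of $u,v$ are $\eps$-separated and $\|v\|\le 2\|u\|$, then $\|u+v\|\le\|u\|+\gamma\|v\|$ with $\gamma=\gamma(\eps)<1$) then yields by induction
$\|v_{m+rn}\|\le\|v_m\|+r\gamma\|v_n\|$ for all $r\le k$. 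Each of the $k$ steps loses the \emph{fixed} amount $(1-\gamma)\|v_n\|\ge(1-\gamma)n$, and $k\approx m/n$, so the total loss is about $(1-\gamma)m$, a positive proportion of the norm --- unlike the doubling chain, whose per-scale loss after normalization is only $o(1)$. Concretely, at $r=k$ one gets
$m+kn\le\|v_{m+kn}\|\le\|v_m\|+k\gamma\|v_n\|\le(1+\delta)(m+\gamma kn)$,
and since $kn\ge m$ this forces $1-(1+\delta)\gamma\le\delta$, which is false for $\delta$ small. The contradiction shows far-apart directions must also merge, so $v_n/\|v_n\|$ is Cauchy, and completeness of $X$ finishes the proof. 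Note also that your convex-combination formulation of uniform convexity only handles vectors of comparable length; Lemma \ref{lem} is precisely the asymmetric version (short $v_n$ added to long $v_{m+rn}$) that makes each step of this chain work.
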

This was asked by the second author on MathOverflow \cite{MO} (for the case of the finite-dimensional Hilbert space), and in this paper we present a cleaned up and streamlined proof that was presented on MathOverflow by the first author.

It is worth mentioning that Fekete's subadditive lemma applied to the sequence $\{-a_n\}$ gives that if the sequence satisfies $a_{n+m}\ge a_n + a_m$ then the limit $\lim_{n\to \infty} \frac{a_n}{n} = \sup_{n\in \N} \frac{a_n}{n}$ still exists. However, due to the presence of the norm, in the Theorem \ref{main} we clearly can not change the direction of the inequality -- for example any sequence of vectors $v_n$ such that $\frac{\|v_n\|}{n}$ is constant clearly satisfies $\|v_{n+m}\|\geq\|v_n+v_m\|$ by the triangle inequality but there is no reason for the limit $\lim_{n\to \infty} \frac{v_n}{n}$ to exist.

We also mention an extension of the classical Fekete's subadditive lemma by N. G. de Bruijn and P. Erd\H{o}s \cite[Theorem 22]{Bruijn_Erdos}, stating that if $\{a_n\}_{n\in\N}\subset\R$ is a sequence satisfying $a_{n+m}\leq a_n+a_m$ for all $\frac12 n\leq m\leq 2n$ then $\lim_{n\to\infty}\frac{a_n}{n}=\inf_{n\in\N}\frac{a_n}{n}$. In Section \ref{Sec.BE}, we will show that it only holds in one-dimensional Hilbert spaces, but can fail in the general setup of Theorem \ref{main}.

%taking $$v_n=\mathrm e^{\mathrm i\frac{\pi}{2}\ln n}=H=\R^2=\mathbb C,$$
%then for all $n,m\in\N$ satisfying $\frac12 n\leq m\leq 2n$ we have
%\[\|v_{n+m}\|=1\leq \|v_n+v_m\|=\left|1+\exp\left(\mathrm i\frac\pi2\ln\frac{n}{m}\right)\right|_{\mathbb C},\]
%since $|\ln 1/2|=|\ln 2|<1$, but clearly 

If the Banach space $X$ is not convex, that is if there are distinct vectors $u, v\in X$ with $\|u\| = \|v\| =1, \|u+v\| = 2$ then it is easy to see that Theorem \ref{main} fails for $X$. Indeed, if we take $v_{2n} = 2nv, v_{2n+1} = (2n+1)u$ then this sequence will satisfy even the equality $ \|v_{n+m}\|=\|v_n + v_m\|$, but the limit $\lim_{n\to\infty} \frac{v_n}{n}$ clearly does not exist. In particular, since finite-dimensional space is convex if and only if it is uniformly convex, we get the following simple corollary.
\begin{corollary}
Let $(X,\|\cdot\|)$ be a finite-dimensional Banach space. Then $X$ is convex if and only if for all sequences $\{v_n\}_{n\in\N}$ of vectors in $X$ such that $\|v_{n+m}\|\le \|v_n+v_m\|$ holds for all $n, m\in \N$, the limit $\lim_{n\to \infty}\frac{v_n}{n}$ exists.
\end{corollary}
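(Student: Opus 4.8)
The plan is to establish the two implications of the biconditional separately, exploiting the fact recorded in the text that in finite dimensions convexity and uniform convexity coincide (by compactness of the unit sphere).

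For the forward implication, suppose $X$ is convex. Since $X$ is finite-dimensional it is then uniformly convex, so Theorem \ref{main} applies verbatim: every sequence $\{v_n\}_{n\in\N}$ satisfying $\|v_{n+m}\|\le\|v_n+v_m\|$ for all $n,m$ admits a limit $\lim_{n\to\infty} v_n/n$. This direction is thus immediate once Theorem \ref{main} is invoked, and is the only place where uniform convexity (hence the full strength of the main result) is used.

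For the reverse implication I would argue by contraposition, showing that if $X$ is not convex then the limit-existence property fails. Non-convexity supplies distinct unit vectors $u,v$ with $\|u+v\|=2$. The key structural observation is that this forces the entire segment from $u$ to $v$ onto the unit sphere: the map $t\mapsto\|tu+(1-t)v\|$ is convex on $[0,1]$, bounded above by $1$ there (triangle inequality), and equal to $1$ at both endpoints and at the midpoint $t=\tfrac12$, so it is identically $1$; by positive homogeneity this yields $\|\alpha u+\beta v\|=\alpha+\beta$ for all $\alpha,\beta\ge 0$. With this in hand I would take the sequence from the remark preceding the corollary, $v_{2n}=2nv$ and $v_{2n+1}=(2n+1)u$, and verify $\|v_{n+m}\|\le\|v_n+v_m\|$ by checking the parity cases: when $n,m$ have the same parity, $v_n+v_m$ is a nonnegative multiple of a single vector and both sides equal $n+m$; when the parities differ, $v_n+v_m=\alpha u+\beta v$ with $\alpha,\beta\ge 0$ and $\alpha+\beta=n+m$, so the flat-segment identity again gives $\|v_n+v_m\|=n+m=\|v_{n+m}\|$. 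Hence the hypothesis holds (indeed with equality), while $v_n/n$ alternates between the distinct vectors $v$ and $u$, so $\lim_{n\to\infty} v_n/n$ does not exist. This establishes the contrapositive and completes the equivalence.

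I expect no serious obstacle: the two directions are short, one being a direct appeal to Theorem \ref{main} and the other an explicit counterexample. The only point requiring genuine care is the flat-segment identity $\|\alpha u+\beta v\|=\alpha+\beta$, which is precisely what makes the mixed-parity case of the subadditivity verification go through.
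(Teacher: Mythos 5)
Your proposal is correct and follows essentially the same route as the paper: the forward direction is the paper's reduction (finite-dimensional convex $\Rightarrow$ uniformly convex by compactness, then Theorem \ref{main}), and the reverse direction uses exactly the counterexample sequence $v_{2n}=2nv$, $v_{2n+1}=(2n+1)u$ given in the paragraph preceding the corollary. The only difference is that you spell out the flat-segment identity $\|\alpha u+\beta v\|=\alpha+\beta$ for $\alpha,\beta\ge 0$, which the paper leaves implicit in its claim that the equality $\|v_{n+m}\|=\|v_n+v_m\|$ is ``easy to see''; your convexity argument for it is correct and is indeed the detail needed for the mixed-parity case.
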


If the Banach space is infinite-dimensional, the situation becomes less clear. In Section 4 we will present an example of a convex Banach space for which Theorem \ref{main} fails, as well as an example of a non-uniformly convex Banach space for which Theorem \ref{main} holds. So, for a Banach space, the assumption that the Fekete's lemma holds is strictly between convexity and uniform convexity.

Although we do not know a necessary and sufficient condition on the Banach space $X$ for the Fekete's lemma to hold in $X$, in Section 5 we present a criterion on the convex Banach space $X$ together with a subadditive sequence $\{v_n\}_{n\in\N}\subset X\backslash\{0\}$ for the existence of the limit $\lim_{n\to\infty} \frac{v_n}{n}$, which is similar to Theorem \ref{main} in that it requires a uniform convexity assumption: the limit $\lim_{n\to\infty} \frac{v_n}{n}$ exists if and only if either $\lim_{n\to\infty} \frac{\|v_n\|}{n} = 0$ or the sequence of vectors $\{\frac{v_n}{\|v_n\|}\}_{n\in\N}$ is a uniformly convex subset of $X$.

\section{Proof of Theorem \ref{main}}
We begin by noting that from the usual Fekete's lemma and the triangle inequality we get that $L = \lim_{n\to \infty} \frac{\|v_n\|}{n} = \inf_{n\in \N} \frac{\|v_n\|}{n}< \infty$ exists. Clearly, $L\ge 0$ since it is a limit of non-negative quantities. If $L = 0$ then $\frac{v_n}{n}$ tends to $0$ and there is nothing to prove. So, without loss of generality we can assume that $0 < L < \infty$. By replacing $v_n$ with $\frac{v_n}{L}$ we can also assume that $L = 1$. At the heart of our proof is the following purely geometric lemma.
\begin{lemma}\label{lem}
Let $X$ be a uniformly convex Banach space and let $u, v\in X$ be non-zero vectors such that $\left\|\frac{u}{\|u\|} - \frac{v}{\|v\|}\right\| \ge \eps$ and such that $\|v\| \le 2 \|u\|$. There exists constant $\gamma = \gamma(\eps) < 1$ such that 
\begin{equation}\label{eq}
\|u + v\| \le \|u\| + \gamma \|v\|.
\end{equation}
\end{lemma}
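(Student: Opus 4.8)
The plan is to reduce everything to the modulus of uniform convexity. Write $a=\|u\|$ and $b=\|v\|$, and set $\hat u = u/\|u\|$, $\hat v = v/\|v\|$, so that $\hat u,\hat v$ are unit vectors with $\|\hat u-\hat v\|\ge \eps$. Applying the definition of uniform convexity to these two unit vectors produces a $\delta=\delta(\eps)>0$ with $\|\hat u+\hat v\|\le 2-\delta$. The inequality \eqref{eq} should then follow by writing $u+v=a\hat u+b\hat v$ and extracting one copy of the short vector $\hat u+\hat v$, the remainder being a nonnegative multiple of a single unit vector, to which the triangle inequality loses nothing.

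Concretely, I would split into two cases according to which of $a,b$ is larger. If $b\le a$, I would write $a\hat u+b\hat v = b(\hat u+\hat v)+(a-b)\hat u$; since $a-b\ge 0$, the triangle inequality together with $\|\hat u+\hat v\|\le 2-\delta$ gives $\|u+v\|\le b(2-\delta)+(a-b)=a+(1-\delta)b$, which is \eqref{eq} with $\gamma=1-\delta$. If instead $a\le b$, I would symmetrically write $a\hat u+b\hat v = a(\hat u+\hat v)+(b-a)\hat v$ and obtain $\|u+v\|\le a(2-\delta)+(b-a)=(1-\delta)a+b$.

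The delicate step is the second case, and it is exactly here that the hypothesis $b\le 2a$ enters: the bound $(1-\delta)a+b$ has the saving $\delta a$ attached to $a$ rather than to $b$, so to rewrite it in the form $a+\gamma b$ I must trade the saving against $b$. Since $b\le 2a$ gives $a\ge b/2$, the saving satisfies $\delta a\ge \tfrac{\delta}{2}b$, whence $(1-\delta)a+b\le a+(1-\tfrac{\delta}{2})b$, which is \eqref{eq} with $\gamma=1-\delta/2$. Taking $\gamma(\eps)=1-\delta(\eps)/2<1$ then covers both cases at once (in the first case $1-\delta\le\gamma$, so the bound only improves). I expect the only genuine subtlety to be this transfer of the uniform-convexity gain from $\|u\|$ to $\|v\|$; indeed, without a hypothesis bounding $b/a$ the gain $\delta a$ would be swamped by $b$, and no $\gamma<1$ independent of $u,v$ could exist, which is precisely why the comparability assumption $\|v\|\le 2\|u\|$ cannot be dropped.
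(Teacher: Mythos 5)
Your proof is correct and takes essentially the same approach as the paper's: both extract a scalar multiple of $\frac{u}{\|u\|}+\frac{v}{\|v\|}$, apply uniform convexity to that piece, and handle the remainder (a nonnegative multiple of a single vector) by the triangle inequality. The only difference is cosmetic: the paper always splits off the coefficient $\frac{\|v\|}{2}$, which the hypothesis $\|v\|\le 2\|u\|$ makes admissible without any case distinction, whereas you split off $\min(\|u\|,\|v\|)$ and then use the hypothesis to trade the gain back onto $\|v\|$ in the case $\|u\|\le\|v\|$; both routes give $\gamma = 1-\delta/2$.
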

This lemma was initially proved by us only for the case when $X$ is a Hilbert space by a direct computation with the law of cosines. We thank Fedor Petrov for suggesting us a simple proof of this lemma for all uniformly convex Banach spaces, and for allowing us to include it in our text.
\begin{proof}
Put $u_1 = \frac{\|v\|}{2\|u\|}u$, $u_2 = u-u_1 = \left(1-\frac{\|v\|}{2\|u\|}\right)u$. We have $\|u_1\| = \frac{\|v\|}{2}$, $\|u_2\| = \|u\|-\|u_1\|$ since $\|v\| \le 2\|u\|$. We have $u + v = u_1 + u_2 + \frac{v}{2} + \frac{v}{2} = u_2 + \frac{v}{2} + (u_1 + \frac{v}{2})$. For the sum in brackets we have $\|u_1 + \frac{v}{2}\| = \frac{\|v\|}{2} \left\|\frac{u}{\|u\|}+\frac{v}{\|v\|}\right\|$. By the uniform convexity and our assumption on $u$ and $v$ this is at most $\frac{\|v\|}{2}(2-\delta)$ for some $\delta > 0$. Using the triangle inequality we get
$$\|u+v\| \le \|u_2\| + \left\|\frac{v}{2}\right\| + \left\|u_1 + \frac{v}{2}\right\| \le \|u\|-{\frac{\|v\|}2} + \frac{\|v\|}{2} + \frac{\|v\|}{2}(2-\delta) = \|u\| + \|v\|\left(1 - \frac{\delta}{2}\right),$$
so $\gamma = 1-\frac{\delta}{2}$ works.
\end{proof}
\begin{remark}
The constant $2$ in the inequality $\|v\| \le 2\|u\|$ is not important, for any constant in its place we would get a similar bound ($\gamma$ would of course depend on this constant). For our proof of Theorem \ref{main} any positive constant bigger than $1$ would work.
\end{remark}

\begin{remark}
By taking $u$ and $v$ with $\|u\| = \|v\| = 1$ it is easy to see that Lemma \ref{lem} implies that $X$ is uniformly convex. Thus, $X$ is uniformly convex if and only if Lemma \ref{lem} holds for $X$.
\end{remark}

Our goal is to prove that the sequence $w_n =\frac{v_n}{n}$ is Cauchy. Since $\|w_n\|\to 1$,  this is the same as saying that the sequence $\frac{w_n}{\|w_n\|} = \frac{v_n}{\|v_n\|}$ is Cauchy. We will do this in two steps. First, we will show that $\left\|\frac{v_n}{\|v_n\|}-\frac{v_m}{\|v_m\|}\right\|$ is small if the ratio between $n$ and $m$ is not too big.
\begin{proposition}\label{prop} For all $\eps > 0$ there exists $N$ such that for all $N \le n \le m \le 4n$ we have $\left\|\frac{v_n}{\|v_n\|} - \frac{v_m}{\|v_m\|}\right\| < \eps$.
\end{proposition}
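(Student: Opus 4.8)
The plan is to argue by contradiction, exploiting Lemma \ref{lem} together with the normalization already in force. Recall that after rescaling we have $\|v_k\| \ge k$ for every $k$ and $\frac{\|v_k\|}{k} \to 1$. Fix $\eps > 0$ and suppose the conclusion fails, so that for arbitrarily large $n$ there is an index $m$ with $n \le m \le 4n$ and $\left\|\frac{v_n}{\|v_n\|} - \frac{v_m}{\|v_m\|}\right\| \ge \eps$ (the case $n = m$ being trivial).

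The crucial step is to feed such a pair into Lemma \ref{lem}, and the point is to choose the roles correctly: I would apply it with $u = v_m$ and $v = v_n$. Since $\|v_m\| \ge m \ge n$ while $\|v_n\| \le (1 + o(1))\, n$, the hypothesis $\|v_n\| \le 2\|v_m\|$ holds once $n$ is large, and the normalized vectors are $\eps$-separated by assumption. Hence Lemma \ref{lem} gives
\[ \|v_n + v_m\| \le \|v_m\| + \gamma \|v_n\|, \]
with $\gamma = \gamma(\eps) < 1$ depending only on $\eps$.

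Now I would combine this with subadditivity and the two-sided control of the norms. Subadditivity gives $n + m \le \|v_{n+m}\| \le \|v_n + v_m\|$, while $\frac{\|v_k\|}{k} \to 1$ gives $\|v_m\| \le (1+\eta)m$ and $\|v_n\| \le (1+\eta)n$ for any prescribed $\eta > 0$ once $n \ge N(\eta)$. Combining these,
\[ n + m \le (1+\eta)m + \gamma(1+\eta)n. \]
Rearranging and using $m \le 4n$ yields $n(1 - \gamma - \gamma\eta) \le \eta m \le 4\eta n$, hence $1 - \gamma \le 5\eta$. Choosing $\eta < \frac{1-\gamma}{5}$, which is legitimate because $\gamma$ is already fixed once $\eps$ is fixed, produces a contradiction for all large $n$ and proves the proposition.

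The main obstacle is conceptual rather than computational: one must notice that Lemma \ref{lem} should be invoked with the larger-norm vector $v_m$ in the role of $u$, so that the savings factor $\gamma$ multiplies the \emph{smaller} term $\|v_n\|$. This is exactly what lets the leading contribution $m$ on both sides cancel and expose the defect $1 - \gamma$; the reversed choice would both violate the norm ratio hypothesis for $m$ near $4n$ and fail to isolate the decisive term. The constant $4$ plays no essential role beyond keeping the ratio $m/n$ bounded, so that the error terms $\eta m$ stay uniformly small; as the remark following Lemma \ref{lem} notes, any fixed constant larger than $1$ would serve equally well.
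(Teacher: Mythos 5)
Your proposal is correct and follows essentially the same route as the paper: apply Lemma \ref{lem} with $u = v_m$, $v = v_n$ (using $\|v_n\| \le 2n \le 2m \le 2\|v_m\|$ for large $n$), combine the resulting bound $\|v_n+v_m\| \le \|v_m\| + \gamma\|v_n\|$ with the subadditivity lower bound $n+m \le \|v_{n+m}\|$ and the upper bounds $\|v_k\| \le (1+\eta)k$, and derive the contradiction $1-\gamma \lesssim \eta$ using $m \le 4n$. The algebra and the choice of which vector plays the role of $u$ match the paper's proof exactly.
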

\begin{proof}
Since $\frac{\|v_k\|}{k}$ tends to $1$, for any $\delta > 0$ there exists $N_\delta$ such that $\|v_k\| \le (1+\delta)k$ for $k > N_{\delta}$. On the other hand, from the Fekete's lemma we know that $\|v_k\| \ge k$ for all $k\in \N$. 

First, we choose $N > N_1$ so that $\|v_n\| \le 2n \le 2m \le 2\|v_m\|$. Hence, we can apply Lemma \ref{lem} to $v_m$ and $v_n$. If $\|\frac{v_n}{\|v_n\|}-\frac{v_m}{\|v_m\|}\| \ge \eps$ then $$\|v_m + v_n\| \le \|v_m\| + \gamma \|v_n\|.$$
By choosing $N > N_\delta$ this is at most $(1+\delta)m + \gamma (1+\delta)n = (1+\delta)(m+\gamma n)$. On the other hand, this is at least $\|v_{n+m}\|$ which in itself is at least $n+m$. Combining this we get
$$(1+\delta)(m+\gamma n) \ge n+m.$$
Rearranging, we get $\delta m \ge n(1-(1+\delta)\gamma)$. On the other hand, since $m \le 4n$ we have $4\delta n \ge (1-(1+\delta)\gamma)n$. Dividing by $n$ we get $4\delta \ge (1-(1+\delta)\gamma)$, but this is false for small enough $\delta$ since as $\delta \to 0$ the left-hand side tends to $0$ while the right-hand side tends to $1-\gamma > 0$. So, for small enough $\delta$ if $N > N_\delta$ then $\left\|\frac{v_n}{\|v_n\|}-\frac{v_m}{\|v_m\|}\right\| < \eps$.
\end{proof}

It remains to show that if the ratio between $n$ and $m$ is big the distance between $\frac{v_n}{\|v_n\|}$ and $\frac{v_m}{\|v_m\|}$ must nevertheless go to $0$. For this, one application of the inequality $\|v_{n+m}\|\le \|v_n + v_m\|$ is not enough, we have to use it multiple times. So, assume that $m > 4n > n > N$ for a big number $N$ and that $\|\frac{v_n}{\|v_n\|}-\frac{v_m}{\|v_m\|}\| \ge 2\eps$ for some fixed $\eps > 0$. Then we look for a contradiction.

Applying Lemma \ref{lem} to $v_m$ and $v_n$ we get that $\|v_{n+m}\|\le \|v_m\|+\gamma \|v_n\|$ (note that here, just as in the proof of Proposition \ref{prop}, we can assume that $\|v_n\| \le 2 \|v_m\|$ if $N$ is big enough). On the other hand, $n + m \le m + m = 2m$, therefore we can apply Proposition \ref{prop} to the vectors $v_m$ and $v_{n+m}$ and get $\left\|\frac{v_m}{\|v_m\|}-\frac{v_{n+m}}{\|v_{n+m}\|}\right\| < \eps$ if $N$ is big enough. Hence, by the triangle inequality we have $\left\|\frac{v_n}{\|v_n\|}-\frac{v_{n+m}}{\|v_{n+m}\|}\right\| \ge \eps$. Thus, we can apply Lemma \ref{lem} to $v_n$ and $v_{n+m}$, then $$\|v_{2n+m}\| \le \|v_{n+m}+v_n\| \le \|v_{n+m}\| + \gamma \|v_n\| \le \|v_m\| + 2\gamma \|v_n\|.$$

We will continue doing this up to the vector $v_{kn+m}$, where $k$ is the smallest integer such that $kn \ge m$. More precisely, we will show by induction that 
\begin{equation}\label{intermediate}
\|v_{rn+m}\|\leq\|v_m\|+r\gamma\|v_n\|
\end{equation}
holds for all $r\le k$. For $r = 1, 2$ we already obtained it. Assume that it holds for some $r < k$.  Then $N\le m\le rn + m \le m+m = 2m$, hence we can apply Proposition \ref{prop} to $v_m$ and $v_{{rn}+m}$ which tells us that $\left\|\frac{v_m}{\|v_m\|}-\frac{v_{rn+m}}{\|v_{rn+m}\|}\right\| < \eps$. Since $\left\|\frac{v_n}{\|v_n\|}-\frac{v_m}{\|v_m\|}\right\| \ge 2\eps$, by the triangle inequality we get $\left\|\frac{v_n}{\|v_n\|}-\frac{v_{rn+m}}{\|v_{rn+m}\|}\right\| \ge \eps$. Note that $\|v_n\|\leq 2n\leq 2(rn+m)\leq 2\|v_{rn+m}\|$, therefore Lemma \ref{lem} implies that
\[\|v_{(r+1)n+m}\|\leq\|v_n+v_{rn+m}\|\leq \|v_{rn+m}\|+\gamma\|v_n\|\leq \|v_m\|+(r+1)\gamma\|v_n\|,\]
so \eqref{intermediate} holds for $r+1$.
Taking $r=k$ we get
$$\|v_{kn+m}\| \le \|v_m\| + k\gamma \|v_n\|.$$

If $N$ is big enough we can assume that $\|v_m\| \le (1+\delta)m$, $\|v_n\| \le (1+\delta)n$. On the other hand, $\|v_{kn+m}\| \ge kn + m$. Thus, 
$$kn+m \le (1+\delta)(m + kn\gamma).$$
Rearranging this we get $kn(1-(1+\delta)\gamma) \le \delta m$. On the other hand, $kn \ge m$, so we deduce that $(1-(1+\delta)\gamma) \le \delta$. And now, just like in the proof of Proposition \ref{prop}, this is false for small enough $\delta$. Hence, for $N > N_{\delta}$ we have $\left\|\frac{v_m}{\|v_m\|}-\frac{v_n}{\|v_n\|}\right\|<2\eps$. This completes the proof of Theorem \ref{main}.

\section{Extension of the de Bruijn--Erd\H{o}s subadditive lemma}\label{Sec.BE}
In this section, we will try to extend the de Bruijn--Erd\H{o}s subadditive lemma to the Banach space-valued sequences. We start with one-dimensional spaces.

\begin{proposition}\label{Prop.1-D}
    Let $\{a_n\}_{n\in\N}\subset \R$ be a sequence satisfying
    \begin{equation}\label{Eq.subadditive}
        |a_{n+m}|\leq |a_n+a_m|,\quad\text{~for all~}\ \frac12 n\leq m\leq 2n.
    \end{equation}
    Then the limit $\lim_{n\to\infty}\frac{a_n}{n}\in\R$ exists.
\end{proposition}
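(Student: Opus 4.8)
The plan is to reduce to the scalar de Bruijn--Erd\H{o}s lemma applied to the magnitudes $|a_n|$ and then to control the signs separately. First I would set $b_n = |a_n|$. Since $|a_{n+m}| \le |a_n + a_m| \le |a_n| + |a_m|$ whenever $\frac12 n \le m \le 2n$, the sequence $\{b_n\}$ satisfies the hypothesis of the de Bruijn--Erd\H{o}s lemma quoted in the introduction, so $L := \lim_{n\to\infty} \frac{|a_n|}{n} = \inf_{n\in\N} \frac{|a_n|}{n}$ exists and is finite and non-negative. If $L = 0$ then $\frac{a_n}{n} \to 0$ and we are done, so from now on I would assume $L > 0$. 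In particular $|a_n| \ge \frac{L}{2} n > 0$ for all large $n$, so $a_n$ has a well-defined sign $\sgn(a_n) \in \{-1, +1\}$ eventually, and it suffices to prove that this sign is eventually constant, since then $\frac{a_n}{n} = \sgn(a_n)\,\frac{|a_n|}{n} \to \pm L$.

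The heart of the argument is to show that two indices whose ratio is bounded cannot carry opposite signs once they are large. Fix $\eps$ with $0 < \eps < \frac{L}{4}$ and choose $N$ so that $(L-\eps)k \le |a_k| \le (L+\eps)k$ for all $k \ge N$. Suppose $n, m \ge N$ with $\frac12 n \le m \le 2n$ (a symmetric constraint, so I may assume $n \le m \le 2n$) and suppose $a_n, a_m$ have opposite signs. Then $|a_n + a_m| = \bigabs{|a_n| - |a_m|} \le L(m-n) + \eps(n+m)$, while $|a_{n+m}| \ge (L-\eps)(n+m)$. Combining these with the subadditivity hypothesis $|a_{n+m}| \le |a_n + a_m|$ and using $m - n \le n \le \frac{n+m}{2}$ gives $(L - 2\eps)(n+m) \le \frac{L}{2}(n+m)$, i.e. $\eps \ge \frac{L}{4}$, contradicting the choice of $\eps$. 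Hence for all $n, m \ge N$ with $\frac12 n \le m \le 2n$ the signs of $a_n$ and $a_m$ coincide.

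It remains to upgrade this local statement to a global one. Given any $n, m \ge N$, I would connect them by a finite chain $n = k_0, k_1, \dots, k_t = m$ of integers $\ge N$ in which each consecutive ratio lies in $[\frac12, 2]$ --- for instance, taking $k_{i+1} = \min(2k_i, m)$ when $m \ge n$. Applying the window step along the chain shows $\sgn(a_n) = \sgn(a_m)$, so the sign is constant for all $n \ge N$, which completes the proof. I expect the main obstacle to be the sign analysis of the second paragraph: the de Bruijn--Erd\H{o}s lemma immediately handles the magnitudes, but nothing in the hypothesis directly forbids sign oscillations, and the quantitative estimate forcing agreement genuinely uses both the two-sided bound $(L-\eps)k \le |a_k| \le (L+\eps)k$ and the restriction $m \le 2n$, which is exactly what makes $m-n \le \frac{n+m}{2}$ and thereby opens the gap between $\frac{L}{2}$ and $L$. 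The chaining step is then routine.
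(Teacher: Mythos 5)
Your proposal is correct and follows essentially the same route as the paper: both first apply the de Bruijn--Erd\H{o}s lemma to $|a_n|$, then rule out opposite signs at comparable indices by playing the lower bound $|a_{n+m}|\ge (L-\eps)(n+m)$ against the cancellation $|a_n+a_m|=\bigl||a_n|-|a_m|\bigr|$ for a sufficiently small fixed $\eps$. The only cosmetic difference is that the paper runs this argument on consecutive pairs $(n,n+1)$, which makes a chaining step like yours unnecessary.
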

\begin{proof}
    By \eqref{Eq.subadditive}, the triangle inequality and \cite[Theorem 22]{Bruijn_Erdos}, there exists $L\in[0,+\infty)$ such that
    \[\lim_{n\to\infty}\frac{|a_n|}{n}=L=\inf_{n\in\N}\frac{|a_n|}{n}.\]
    If $L=0$, then $\lim_{n\to\infty}\frac{a_n}{n}=0$. Hence we assume $L>0$. For any $\varepsilon\in(0, L/4)$, there exists $N>2$ such that
    \[\left|\frac{|a_n|}{n}-L\right|<\varepsilon\quad\text{~for all~}\ n\geq N.\]
    Hence,
    \begin{equation}\label{Eq.a_n}
        n(L-\varepsilon)<|a_n|<n(L+\varepsilon)\quad\text{~for all~}\ n\geq N.
    \end{equation}
    
    We claim that if $\varepsilon$ is small then for all $n\geq N$ we have $a_na_{n+1}>0$. Indeed, suppose not, then there exists some $n\geq N$ such that $a_na_{n+1}<0$ (by \eqref{Eq.a_n} we know that $a_n\neq 0$ for $n\geq N$), thus by \eqref{Eq.subadditive} and \eqref{Eq.a_n} we have
    \[(2n+1)(L-\varepsilon)\leq |a_{2n+1}|\leq |a_n+a_{n+1}|\leq L+(2n+1)\varepsilon.\]
    In particular, we have $(2n+1)(L-2\varepsilon)\le L$ for some $n\geq N$. Taking $\varepsilon = \frac{L}{8}$ and $N > 2$ gives a contradiction. This proves our claim.

    Hence, $a_n$ has the same sign for all sufficiently large $n$. Therefore, the limit $\lim_{n\to\infty}\frac{a_n}{n}$ exists and equals to $L$ or $-L$.
\end{proof}

Now we will show that even if in Proposition \ref{Prop.1-D} we replace $\R$ by $\R^2$ with the Euclidean norm, the conclusion fails. Consider the sequence $r_n = n +\frac{n}{\ln(n+1)^{1/2}}$, it clearly satisfies $\lim_{n\to\infty}\frac{r_n}{n}=1>0$. We claim that
\begin{equation}\label{Eq.r_n}
    r_{n+m}^2\leq r_n^2+r_m^2+2\left(1-\frac {1}{100\ln( n+1)^{3/2}}\right)r_nr_m\quad\text{~for all~}\ n\leq m\leq 2n,\ n\in\N.
\end{equation}
This can be rewritten as $$(r_n+r_m-r_{n+m})(r_n + r_m +r_{n+m}) \ge  \frac{2r_n r_m}{100\ln(n+1)^{3/2}}.$$
We have $$r_n + r_m - r_{n+m} \ge \frac{n}{\ln(n+1)^{1/2}}-\frac{n}{\ln(n+m+1)^{1/2}} \ge n\left(\frac{1}{\ln(n+1)^{1/2}} - \frac{1}{\ln(2n+1)^{1/2}}\right),$$
and $r_n + r_m + r_{n+m} \ge r_m$, while 
$$\frac{2r_n r_m}{100\ln(n+1)^{3/2}} \le \frac{nr_m}{20 \ln(n+1)^{3/2}}.$$
Thus, it remains to show that
$$\frac{1}{\ln(n+1)^{1/2}} - \frac{1}{\ln(2n+1)^{1/2}} \ge \frac{1}{{20}\ln(n+1)^{3/2}}.$$
The left-hand side is equal to
$$\frac{\ln(2-\frac{1}{n+1})}{\ln(n+1)^{1/2}\ln(2n+1)^{1/2}(\ln(n+1)^{1/2}+\ln(2n+1)^{1/2})},$$
which is at least $\frac{\ln(3/2)}{2\ln(2n+1)^{3/2}}$. It remains to note that $$\ln(2n+1) \le \ln((n+1)^2)= 2\ln(n+1),$$ so the inequality holds even with the constant $\frac{2^{5/2}}{\ln(3/2)} < 14$ in place of $20$.

Now, we take $\theta_n=\frac{(\ln n)^{1/4}}{100}$ for all $n\in\N$. We claim that
\begin{equation}\label{Eq.theta_2n-n}
0\le \theta_{2n} - \theta_{n} = \frac{\ln (2n)^{1/4} -\ln(n)^{1/4}}{50}\le \frac{\ln(n+1)^{-3/4}}{50},
\end{equation}
where the left inequality is obvious and the right inequality is true for $n = 1$, while for $n > 1$ we have $\ln(2n)^{1/4} \le (\ln(n) + 1)^{1/4} \le \ln(n)^{1/4} + \frac{1}{4\ln(n)^{3/4}}$ since the function $x\to x^{1/4}$ is concave, so
$\ln(2n)^{1/4} - \ln(n)^{1/4} \le \frac{1}{4\ln(n)^{3/4}},$ and $\frac{\ln(n+1)}{\ln(n)} \le \frac{\ln(n^2)}{\ln(n)} = 2$, thus
$$\theta_{2n}-\theta_n \le \frac{2^{3/4}}{200}\ln(n+1)^{-3/4} \le \frac{\ln(n+1)^{-3/4}}{50}.$$

Consider the sequence of vectors $v_n = (r_n\cos(\theta_n), r_n\sin(\theta_n))\in \R^2$. The length of $\frac{v_n}{n}$ converges to $1$, but since $\theta_n$ tends to $+\infty$ while $\theta_{n+1}-\theta_n$ tends to $0$, the limit of the vector $\frac{v_n}{n}$ does not exist. It remains to show that for $\frac{1}{2}n\le m\le 2n$ we have $\|v_n+v_m||\ge \|v_{n+m}||$. Without loss of generality we can assume that $n \le m$, otherwise we can simply swap $n$ and $m$. We have
\begin{align*}
    \|v_n+v_m\|^2=r_n^2+r_m^2+2r_nr_m\cos(\theta_{m}-\theta_n);
\end{align*}
it follows from the monotonicity of $\{\theta_n\}$ and \eqref{Eq.theta_2n-n} that
\[\cos(\theta_{m}-\theta_n)\geq 1-\frac12(\theta_{m}-\theta_n)^2\geq 1-\frac12(\theta_{2n}-\theta_n)^2\geq 1-\frac{1}{100}(\ln n+1)^{-3/2},\]
hence \eqref{Eq.r_n} implies that
\[\|v_n+v_m\|^2\geq r_n^2+r_m^2+2\left(1-\frac {1}{100(\ln n+1)^{3/2}}\right)r_nr_m\geq r_{n+m}^2=\|v_{n+m}\|^2,\]
as required.

Moreover, by taking $r_n = n + ns_n$ with $s_n = (\ln n)^{-\delta}$ and $\theta_n = (\ln n)^{\delta}$ for big enough $n$, where $\delta$ is a small positive number, 
by the same method we can {check} the inequality $\|v_{n+m}\| \le \|v_n+v_m\|$ for an even wider range of pairs $(n, m)$, specifically for $n \le m \le n \exp((\ln n)^{1-\eps})$ where $\eps = \eps(\delta)$ tends to $0$ as $\delta$ tends to $0$. We do not know whether there exist functions $f(n)$ such that the Banach space-valued version of the Fekete's subadditivity lemma is true if we only assume it for $n \le m \le f(n)$.
\begin{question}Let $X$ be a uniformly convex Banach space. Does there exist a function $f:\N\to\N$ such that for all sequences $v_n\in X$ with $\|v_{n+m}\| \le \|v_n + v_m\|$ for $n \le m \le f(n)$ the limit $\lim \frac{v_n}{n}$ exists?
\end{question}
By the Proposition \ref{Prop.1-D} if $X$ is one-dimensional then $f(n) = 2n$ is enough, but already for the two-dimensional Hilbert spaces $X$ we need a superlinear function $f$.
\section{Banach space case}
As we said in the introduction, if the Banach space is not convex then  Fekete's lemma fails in it. Here, we will construct a convex Banach space in which Fekete's lemma is still false.

Consider the Banach space $X$ of sequences $(a_1, a_2, \ldots)$ for which the norm
$$\|(a_1, a_2, \ldots)\| = \sum_{n = 1}^\infty |a_n| + \sqrt{\sum_{n = 1}^\infty \frac{|a_n|^2}{16^n}}$$
is finite. This is clearly a norm (in fact, $X$ is nothing but $\ell^1(\N)$, just with a slightly different norm), and due to the presence of the quadratic term the space $X$ with this norm is strictly convex. However, if we take $v_n = ne_n$, where $e_n$ is the vector $(0, \ldots 0, 1, 0, \ldots)$, where $1$ is only on $n$'th place, then this sequence satisfies our assumption:
$$\|v_n+v_m\| = n + m + \sqrt{\frac{n^2}{16^n}+\frac{m^2}{16^m}} \ge n + m + \frac{n}{4^n}\ge n + m + \frac{n+m}{4^{n+m}} = \|v_{n+m}\|,$$
where the second inequality is true because the function $\frac{x}{4^x}$ is decreasing on $[1, \infty)$. On the other hand, the limit $\lim_{n\to \infty} \frac{v_n}{n}$ clearly does not exist.

This example is a slight perturbation of a non-convex Banach space $\ell^1(\N)$. Yet, since it is convex it shows that convexity is not a sufficient condition for the Fekete's lemma to hold. Now, we will show an example which demonstrates that uniform convexity is not necessary for it to hold either, hence the condition that the Fekete's lemma holds is strictly between convexity and uniform convexity. This example was communicated to us by Dongyi Wei and we thank him for allowing us to include it in our text.

Consider the Banach space X of sequences $x=(x_1, x_2, \ldots)$ for which the norm
$$\|x\| = \left(\sum_{k=1}^\infty \left(|x_{2k-1}|^{k+1}+|x_{2k}|^{k+1}\right)^{2/(k+1)}\right)^{1/2}= \|\left(\|(x_{2k-1}, x_{2k})\|_{\ell^{k+1}}\right)\|_{\ell^2(k\in\N)}.$$
As a set $X$ is equal to $\ell^2(\N)$ but unlike $\ell^2(\N)$ it is not uniformly convex: for the sequence of vectors $v_n = \frac{e_{2n-1}+e_{2n}}{2^{1/(n+1)}}, w_n = \frac{e_{2n-1}-e_{2n}}{2^{1/(n+1)}}$, where $e_k$ is a vector with $1$ at the $k$'th position and zero everywhere else, we have $\|v_n\| = \|w_n\| = 1$, $\|v_n-w_n\|{=2^{\frac{n}{n+1}} \ge1}$ and $\|v_n+w_n\| = 2^{1-1/(n+1)}\to 2$ as $n\to \infty$. Yet, as we will now show, the Fekete's lemma holds for $X$.

Let $v_n$ be a sequence of vectors in $X$ such that $\|v_{n+m}\|\le \|v_n + v_m\|$ for all $n, m\in \N$. Consider the vectors $w_n\in \ell^2(\N)$ defined by
$$w_{n, k} = \left(|v_{n, 2k-1}|^{k+1}+|v_{n, 2k}|^{k+1}\right)^{1/(k+1)}.$$

We have $\|w_n\|_{\ell^2} = \|v_n\|_X$ and $\|w_n+w_m\|_{\ell^2} \ge \|w_{n+m}\|_{\ell^2}$, where the first equality is obvious and the second inequality holds because of the subadditivity assumption on $v_n$ and the triangle inequality in $\ell^{k+1}$ for each $k$. By Theorem \ref{main} and the uniform convexity of $\ell^2(\N)$ we get the existence of the limit $W = \lim_{n\to \infty} \frac{w_n}{n}$.

Our next step is to show that for each $k$ the limit of the vector $\frac{u_{n, k}}{n}=(\frac{v_{n, 2k-1}}{n}, \frac{v_{n, 2k}}{n}) $ exists. If $W_k = 0$ then $\frac{\|u_{n, k}\|_{\ell^{k+1}}}{n}$ tends to $0$, therefore $\frac{u_{n, k}}{n}$ tends to $0$. So, from now on we assume that $W_k > 0$. The idea now is to write the inequality $\|v_{n+m}\|\le \|v_n + v_m\|$ and to ignore all but $2k-1$ and $2k$'th coordinates by the triangle inequality. We get
$$\|v_{n+m}\| \le \left(\left(|v_{n, 2k-1}+v_{m, 2k-1}|^{k+1}+|v_{n, 2k} + v_{m, 2k}|^{k+1}\right)^{2/(k+1)}+\sum_{l\neq k} \left(w_{n, l}+w_{m, l}\right)^2\right)^{1/2}.$$

Now, we can essentially repeat the proof of Theorem \ref{main}. If $n$ and $m$ are close, meaning $n \le m \le 4n$ then the vectors $\frac{u_{n, k}}{n}$ and $\frac{u_{m, k}}{m}$ must be close otherwise we will lose a positive portion of the norm which is not allowed, and then if $4n < m$ we apply the above inequality many times to again get that if the vectors are not close then eventually we will lose a positive portion of the norm. Both of these parts crucially rely on the fact that, for fixed $k$, $\ell^{k+1}$ is a uniformly convex Banach space. We leave the details of the computations to the interested reader.

It remains to show that the limit of $\frac{v_n}{n}$ exists as a vector. For each $k$ we showed the existence of the limits $\frac{v_{n, 2k-1}}{n}$ and $\frac{v_{n, 2k}}{n}$, so let us collect all of these limits into a single vector $V$. Our goal is to show that $\lim_{n\to\infty} \frac{v_n}{n} = V$. Computing the norm of $\frac{v_n}{n} - V$, for each of the first $K$ coordinates we know that their contributions tend to $0$ for any $K$, in particular can be made less than arbitrary $\delta > 0$. On the other hand, since we already have a limit vector $w$, we can for any $\eps > 0$ finds $K$ such that $\sum_{k > K} W_k^2 < \eps^2$. Since $\frac{w_n}{n}\to W$ we get that for big enough $n$ we have $\sum_{k > K} w_{n, k}^2 < 2{n^2}\eps^2$.  Then by Fatou's lemma  for big enough $n$ we get $\|\frac{v_n}{n}-V\| \le (K\delta^2 + 4\eps^2)^{1/2}$ which can be made less than $3\eps$ if $\delta$ is small enough depending on $K$ and $\eps$. Since $\eps$ is arbitrary we get that $\lim_{n\to \infty}\frac{v_n}{n} = V$ as required.

The key idea of this example is that we took a sequence of uniformly convex Banach spaces which is not {\it uniformly} uniformly convex, and put them into an $\ell^2$-series. Then, the proof consisted of essentially two applications of the main result, first on the encompassing $\ell^2$ space, and then on each uniformly convex Banach space individually, where we did not care anymore if the triangle inequality is uniform across our Banach spaces. Of course, this procedure can be iterated, so we can produce even more examples by taking nested uniformly convex Banach spaces. Note also that the fact that $\ell^{k+1}$ that we considered were finite-dimensional played no role, the proof would have worked even if they were infinite-dimensional, we made them two-dimensional only for the typographical reasons.

We end this section by briefly describing an example showing that the fact that $X$ is Banach is also necessary, that is if $X$ is uniformly convex but not complete the Fekete's lemma may not hold in $X$. Take $X$ to be the set of sequences $(a_1, a_2, \ldots)$ which are eventually $0$ and equip it with an $\ell^2$-norm. Define the sequence $v_n\in X$ by $$v_{n, m} = \begin{cases}\frac{c_n}{2^{2^m}},\, m\le n,\\
0,\quad\, m > n,\end{cases}$$
where $c_n = n(1 + \frac{10}{\log\log(10n)})$. We leave the verification of the inequality $\|v_{n+m}\|\le\|v_n+v_m\|$ for all $n, m\in \N$ to the interested reader, so that the sequence $\{v_n\}_{n\in\N}$ satisfies the assumption of the Fekete's lemma. On the other hand, the limit $V=\lim_{n\to \infty}\frac{v_n}{n}$ must satisfy $V_m = \frac{1}{2^{2^m}}$, so $V\notin X$.

\section{A necessary and sufficient condition}
The previous discussions show that, given a Banach space $X$, we have not found an efficient way to determine whether Fekete's lemma holds for $X$ or not. Convexity is necessary but not sufficient, and uniform convexity is sufficient but not necessary. In this section, we give a criterion to determine, in a general convex Banach space, given a sequence satisfying the hypothesis of Fekete's lemma, whether the conclusion of Fekete's lemma holds or not. 

We start with the following definition.
\begin{definition}
    Let $(X,\|\cdot\|)$ be a normed space and let $S\subset X$. The subset $S$ is called uniformly convex if for all $\varepsilon>0$ there exists $\delta\in(0,1)$ such that for all $u,v\in S$ satisfying $\|u\|=\|v\|=1$ and $\|u-v\|\geq \varepsilon$, there holds $\|u+v\|\leq 2-\delta$.
\end{definition}
\begin{remark}
    If
$S\cap \{x\in X: \|x\|=1\}=\varnothing,$
then $S$ is uniformly convex by default.
\end{remark}

The following result follows directly from the proof of our main theorem.
\begin{corollary}
    Let $(X,\|\cdot\|)$ be a Banach space and let $\{v_n\}_{n\in\mathbb N}\subset X$ be a sequence such that 
	\begin{equation}\label{Eq.Fekete}
		\|v_{n+m}\|\leq \|v_n+v_m\| \quad \text{for all }n, m\in\mathbb N.
	\end{equation} 
	Assume that either
	\begin{itemize}
		\item there exists $i\in\mathbb N$ such that $v_i=0$, or 
		\item $v_n\neq 0$ for all $n\in\mathbb N$ and $\{\frac{v_n}{\|v_n\|}\}_{n\in\mathbb N}\subset X$ is a uniformly convex subset.
	\end{itemize} 
	Then the limit $\lim_{n\to\infty}\frac{v_n}{n}$ exists.
\end{corollary}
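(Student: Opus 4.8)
The plan is to observe that the proof of Theorem~\ref{main} invokes the uniform convexity of the ambient space $X$ in exactly one way, namely through Lemma~\ref{lem}, and that every single application of Lemma~\ref{lem} in that proof is made to a pair of terms of the sequence $\{v_n\}$ itself. Consequently the only unit vectors on which the uniform convexity inequality is ever tested are of the form $\frac{v_a}{\|v_a\|}$, that is, elements of the set $S:=\{\frac{v_n}{\|v_n\|}\}_{n\in\N}$. Thus, after disposing of the degenerate cases, it will suffice to insert the uniform convexity of $S$ into the very same argument.

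First I would dispose of the easy alternatives. Exactly as at the start of Section~2, the classical Fekete lemma together with the triangle inequality gives that $L:=\lim_{n\to\infty}\frac{\|v_n\|}{n}=\inf_{n\in\N}\frac{\|v_n\|}{n}$ exists and satisfies $L\ge 0$. If $v_i=0$ for some $i$ (the first alternative), then $\frac{\|v_i\|}{i}=0$ forces $L=0$, so $\frac{\|v_n\|}{n}\to 0$ and hence $\frac{v_n}{n}\to 0$. The same conclusion holds under the second alternative whenever $L=0$. It therefore remains to treat the second alternative with $L>0$; here all $v_n\neq 0$, and replacing $v_n$ by $\frac{v_n}{L}$ (which leaves $S$ unchanged, since $\frac{v_n/L}{\|v_n/L\|}=\frac{v_n}{\|v_n\|}$) we may assume $L=1$.

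Now I would rerun the proof of Theorem~\ref{main} line by line, using in place of Lemma~\ref{lem} the following localized version: if $S$ is uniformly convex and $u,v$ are nonzero vectors with $\frac{u}{\|u\|},\frac{v}{\|v\|}\in S$, $\|\frac{u}{\|u\|}-\frac{v}{\|v\|}\|\ge\eps$ and $\|v\|\le 2\|u\|$, then $\|u+v\|\le\|u\|+\gamma\|v\|$ for some $\gamma=\gamma(\eps)<1$. Its proof is verbatim that of Lemma~\ref{lem}, since the one use of convexity there is to bound $\|\frac{u}{\|u\|}+\frac{v}{\|v\|}\|$, and both these unit vectors lie in $S$. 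One then checks that every invocation in the proof of Theorem~\ref{main} respects this restriction: in Proposition~\ref{prop} the lemma is applied to $(v_m,v_n)$; in the final Cauchy argument it is applied to $(v_m,v_n)$, then to $(v_n,v_{n+m})$, and inductively to $(v_n,v_{rn+m})$, while Proposition~\ref{prop} itself is applied to pairs $(v_m,v_{rn+m})$. In each case both arguments are genuine terms $v_j$, so their normalizations lie in $S$. The argument then produces, exactly as before, that $\frac{v_n}{\|v_n\|}$ is Cauchy, hence so is $\frac{v_n}{n}$ because $\|\frac{v_n}{n}\|\to 1$; completeness of $X$ supplies the limit.

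The only thing that genuinely has to be verified — and the sole place the proof could fail — is the bookkeeping claim that no application of Lemma~\ref{lem} or of Proposition~\ref{prop} in the proof of Theorem~\ref{main} ever involves an auxiliary vector constructed outside the sequence $\{v_n\}$. Granting this, the uniform convexity of $S$ (rather than of $X$) is all that is consumed, and the completeness of $X$ — not any convexity of $X$ — is what closes the argument.
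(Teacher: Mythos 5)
Your proof is correct and takes essentially the same route as the paper: the paper likewise dispatches the $v_i=0$ case via the classical Fekete lemma and then asserts that $\{\frac{v_n}{\|v_n\|}\}_{n\in\N}$ is Cauchy ``using the same proof as Theorem \ref{main}.'' Your careful bookkeeping --- that uniform convexity of $X$ enters the proof of Theorem \ref{main} only through Lemma \ref{lem}, and that every application of that lemma (and of Proposition \ref{prop}) is to pairs of genuine sequence terms, so uniform convexity of the subset suffices --- is exactly the content implicit in the paper's terse remark.
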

\begin{proof}
	Observe that if $v_i=0$ for some $i\in\mathbb N$, then by \eqref{Eq.Fekete} $v_{ki}=0$ for all $k\in\mathbb N$. Since the limit $\lim_{n\to\infty}\frac{\|v_n\|}{n}$ exists by the classical Fekete's lemma, we know that $\lim_{n\to\infty}\frac{\|v_n\|}{n}=0$, thus $\lim_{n\to\infty}\frac{v_n}{n}=0$ in $X$.
	
    If $v_n\neq 0$ for all $n\in\mathbb N$, then we can show that $\{\frac{v_n}{\|v_n\|}\}_{n\in\mathbb N}$ is Cauchy, using the same proof as our Theorem \ref{main}.
\end{proof}

The opposite direction is based on the following proposition.
 \begin{proposition}
        Let $(X,\|\cdot\|)$ be a convex normed space and let $S\subset X$ be a subset in which any sequence has a convergent subsequence in $X$. Then $S$ is uniformly convex.
    \end{proposition}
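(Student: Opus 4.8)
The plan is to argue by contradiction using a standard compactness extraction. Suppose $S$ is \emph{not} uniformly convex. Negating the definition, there is a fixed $\varepsilon > 0$ such that for every $\delta \in (0,1)$ one can find $u, v \in S$ with $\|u\| = \|v\| = 1$, $\|u - v\| \geq \varepsilon$, yet $\|u + v\| > 2 - \delta$. Applying this with $\delta = 1/n$ produces sequences $\{u_n\}, \{v_n\} \subset S$ satisfying $\|u_n\| = \|v_n\| = 1$, $\|u_n - v_n\| \geq \varepsilon$, and $\|u_n + v_n\| > 2 - \tfrac{1}{n}$ for every $n \in \N$.

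Next I would use the compactness hypothesis on $S$ to extract limits. By assumption $\{u_n\}$ has a subsequence converging to some $u \in X$; along that subsequence $\{v_n\}$ in turn has a further subsequence converging to some $v \in X$. Passing to this common subsequence (and relabeling), we have $u_n \to u$ and $v_n \to v$ in $X$. Crucially, the hypothesis only guarantees that the limits lie in $X$, not in $S$, but this is all that is needed, since convexity is a property of the ambient space $X$ rather than of $S$.

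Then I would pass to the limit using continuity of the norm. Continuity gives $\|u\| = \|v\| = 1$, and $\|u - v\| = \lim_{n\to\infty} \|u_n - v_n\| \geq \varepsilon > 0$, so in particular $u \neq v$. On the other hand $\|u + v\| = \lim_{n\to\infty} \|u_n + v_n\| \geq 2$, while the triangle inequality forces $\|u + v\| \leq \|u\| + \|v\| = 2$; hence $\|u + v\| = 2$. This contradicts the convexity of $X$, which requires $\|u + v\| < 2$ for distinct unit vectors $u \neq v$. The contradiction shows that $S$ must be uniformly convex.

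The argument presents no substantial obstacle; the only points requiring care are the iterated subsequence extraction, which yields a single subsequence along which both $\{u_n\}$ and $\{v_n\}$ converge, and the observation that one invokes the convexity of the \emph{whole} space $X$ at the limiting unit vectors $u, v$ (which may well escape $S$) rather than any convexity internal to $S$.
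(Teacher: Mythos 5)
Your proof is correct and follows essentially the same route as the paper's: negate uniform convexity to produce sequences $\{u_n\},\{v_n\}\subset S$ of unit vectors with $\|u_n-v_n\|\geq\varepsilon$ and $\|u_n+v_n\|\to 2$, extract a common convergent subsequence using the compactness hypothesis, and pass to the limit to contradict the convexity of $X$ at the limiting unit vectors. The extra care you take (explicit negation with $\delta=1/n$, iterated subsequence extraction, and noting the limits need only lie in $X$, not $S$) matches the paper's argument, just spelled out in more detail.
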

    \begin{proof}
        Let $\varepsilon>0$. Assume on the contrary that there exist two sequences $\{u_n\}_{n\in\mathbb N}\subset S$ and $\{v_n\}_{n\in\mathbb N}\subset S$ such that $\|u_n\|=1=\|v_n\|$, $\|u_n-v_n\|\geq \varepsilon$ for all $n\in\mathbb N$ and $\|u_n+v_n\|\to 2$ as $n\to\infty$. By passing to the subsequences, we can assume without loss of generality that $u_n\to u\in X$ and $v_n\to v\in X$ as $n\to\infty$. Hence $\|u\|=1=\|v\|$, $\|u-v\|\geq \varepsilon$ and $\|u+v\|=2$. This contradicts the convexity of $X$.
    \end{proof}

Immediately from this, we get the following corollary.
\begin{corollary}
    Let $(X,\|\cdot\|)$ be a convex normed space. Assume that $\{v_n\}_{n\in\mathbb N}\subset X$ is a sequence satisfying
	\[\lim_{n\to\infty}\frac{v_n}{n}\in X\setminus\{0\},\]
	then there exists $N_0$ such that $\{\frac{v_n}{\|v_n\|}\}_{n>N_0}$ is a uniformly convex subset. Moreover, if $v_n\neq 0$ for all $n\in\mathbb N$, then $\{\frac{v_n}{\|v_n\|}\}_{n\in\mathbb N}$ is a uniformly convex subset.
\end{corollary}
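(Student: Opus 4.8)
The plan is to reduce everything to the Proposition just established, which states that in a convex normed space any subset in which every sequence has a convergent subsequence is automatically uniformly convex. Thus the entire task becomes checking that the set $S=\{\frac{v_n}{\|v_n\|}\}$ (for large $n$) is sequentially precompact in $X$, and the crucial observation driving this is that the \emph{normalized} vectors actually converge.

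First I would set $V=\lim_{n\to\infty}\frac{v_n}{n}$ and note $\|V\|>0$ by hypothesis. Taking norms in $\frac{v_n}{n}\to V$ gives $\frac{\|v_n\|}{n}\to\|V\|>0$, so there is an $N_0$ with $v_n\neq 0$ for all $n>N_0$ (indeed $\|v_n\|\to\infty$). For such $n$ I write
$$\frac{v_n}{\|v_n\|}=\frac{v_n/n}{\|v_n\|/n},$$
and since the numerator tends to $V$ while the denominator tends to $\|V\|>0$, I conclude $\frac{v_n}{\|v_n\|}\to\frac{V}{\|V\|}=:L$, a unit vector.

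Next I would verify that $S=\{\frac{v_n}{\|v_n\|}:n>N_0\}$ enjoys the sequential-precompactness property required by the Proposition. Given any sequence $\{s_j\}$ in $S$, write $s_j=\frac{v_{n_j}}{\|v_{n_j}\|}$: if the indices $n_j$ admit a subsequence tending to infinity, the corresponding terms converge to $L$; otherwise the indices are bounded, take only finitely many values, and some value repeats infinitely often, giving a constant subsequence. Either way a subsequence converges in $X$, so the Proposition applies and $S$ is uniformly convex. For the ``moreover'' statement, when $v_n\neq 0$ for every $n$ the full set $\{\frac{v_n}{\|v_n\|}:n\in\N\}$ differs from $S$ only by the finitely many unit vectors with $n\le N_0$; adjoining finitely many points preserves the sequential-precompactness property (any sequence has infinitely many terms either in $S$ or among the finite set, the latter yielding a constant subsequence), so the same Proposition shows this larger set is uniformly convex as well.

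The argument is short and I do not anticipate a genuine obstacle; the one point to get right is that normalization really converges, which hinges on the denominator $\frac{\|v_n\|}{n}$ being bounded away from zero. This is exactly where the hypothesis $V\neq 0$ enters, and it is also what forces us to discard the finitely many (possibly zero) initial terms when producing $N_0$.
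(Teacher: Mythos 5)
Your proof is correct and follows exactly the route the paper intends: the paper derives this corollary ``immediately'' from the preceding Proposition, and your argument---showing $\frac{v_n}{\|v_n\|}\to\frac{V}{\|V\|}$ so that the normalized set (plus finitely many extra points in the ``moreover'' case) is sequentially precompact---is precisely the omitted verification. No gaps; the handling of bounded versus unbounded index sequences and of the finitely many initial terms is exactly what is needed.
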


We conclude this section with the following criterion.
\begin{theorem}
    Let $(X, \|\cdot\|)$ be a convex Banach space and let $\{v_n\}_{n\in\mathbb N}\subset X$ be a sequence such that $\|v_{n+m}\|\leq \|v_n+v_m\|$ for all $n,m\in\N$ and $\lim_{n\to\infty}\frac{\|v_n\|}{n}>0$. Then the limit $\lim_{n\to\infty}\frac{v_n}{n}$ exists in $X$ if and only if $\{\frac{v_n}{\|v_n\|}\}_{n\in\N}$ is a uniformly convex subset.
\end{theorem}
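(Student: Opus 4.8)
The plan is to obtain the theorem as a direct packaging of the two corollaries established earlier in this section, the only genuine preliminary observation being that the standing hypothesis $\lim_{n\to\infty}\frac{\|v_n\|}{n}>0$ forces $v_n\neq 0$ for \emph{every} $n\in\N$, not merely for all large $n$. Indeed, applying the classical Fekete's lemma to the subadditive real sequence $a_n=\|v_n\|$ (which is subadditive since $\|v_{n+m}\|\le\|v_n+v_m\|\le\|v_n\|+\|v_m\|$) gives $L:=\lim_{n\to\infty}\frac{\|v_n\|}{n}=\inf_{n\in\N}\frac{\|v_n\|}{n}$. Since $L>0$ by assumption, the infimum characterization yields $\|v_n\|\ge nL>0$ for all $n$, so that $\frac{v_n}{\|v_n\|}$ is well defined for every $n\in\N$.

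For the ``if'' direction I would assume that $\{\frac{v_n}{\|v_n\|}\}_{n\in\N}$ is a uniformly convex subset. Since $X$ is in particular a Banach space and $v_n\neq 0$ for all $n$, this places us exactly in the second bullet of the hypothesis of the first corollary of this section, whose conclusion is the existence of $\lim_{n\to\infty}\frac{v_n}{n}$. Thus this direction is immediate.

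For the ``only if'' direction I would assume that the limit $V=\lim_{n\to\infty}\frac{v_n}{n}$ exists. By continuity of the norm, $\|V\|=\lim_{n\to\infty}\frac{\|v_n\|}{n}=L>0$, so $V\in X\setminus\{0\}$. Now $X$ is a convex normed space and the sequence satisfies $\lim_{n\to\infty}\frac{v_n}{n}\in X\setminus\{0\}$, so the corollary immediately preceding this theorem applies; since in addition $v_n\neq 0$ for all $n$, its ``moreover'' clause yields that $\{\frac{v_n}{\|v_n\|}\}_{n\in\N}$ is a uniformly convex subset, as desired.

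There is no substantial obstacle here: the real content lives entirely in the two corollaries (and, through them, in Theorem \ref{main} and in the intervening Proposition on subsets with the sequential compactness property), while the theorem merely records their equivalence under the assumption $L>0$. The single point requiring a moment of care is precisely the opening remark that $L>0$ upgrades ``$v_n\neq 0$ for large $n$'' to ``$v_n\neq 0$ for all $n$''; this is what allows the second-bullet and ``moreover'' clauses to apply verbatim, and hence what produces the clean statement about the full sequence $\{\frac{v_n}{\|v_n\|}\}_{n\in\N}$ rather than about a tail of it.
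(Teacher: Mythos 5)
Your proposal is correct and matches the paper's intended argument: the paper states this theorem without an explicit proof precisely because it is meant to follow by combining the two preceding corollaries, exactly as you do (the first corollary's second bullet for the ``if'' direction, the second corollary's ``moreover'' clause for the ``only if'' direction). Your one added observation --- that $\lim_{n\to\infty}\frac{\|v_n\|}{n}>0$ together with the infimum characterization in the classical Fekete's lemma forces $\|v_n\|\ge nL>0$, hence $v_n\neq 0$ for \emph{all} $n$ --- is exactly the point needed to invoke those clauses for the full sequence rather than a tail.
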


\subsection*{Acknowledgments} We want to thank Fedor Petrov for helpful comments, and for suggesting us the proof of Lemma \ref{lem} in full generality.  We also would like to thank Dongyi Wei for showing us an example of a non-uniformly convex Banach space in which the Fekete's lemma holds. Aleksei Kulikov was supported by BSF Grant 2020019, ISF Grant 1288/21, by The Raymond and Beverly Sackler Post-Doctoral Scholarship and by the VILLUM Centre of Excellence for the Mathematics of Quantum Theory (QMATH) with Grant No.10059.


\begin{thebibliography}{9999}



\bibitem{Brezis} H. Brezis, {\it Functional analysis, Sobolev spaces and partial differential equations}. Universitext, Springer, New York, 2011. xiv+599 pp.

\bibitem{Bruijn_Erdos} N. G. de Bruijn and P. Erd\"os, Some linear and some quadratic recursion formulas. II. {\it Indag. Math.}, {\bf 14} (1952), 152–163.

\bibitem{Fekete} M. Fekete, \"Uber die Verteilung der Wurzeln bei gewissen algebraischen Gleichungen mit ganzzahligen Koeffizienten. {\it Math. Z.}, {\bf 17} (1923), 228–249.

\bibitem{Hille_Phillips} E. Hille and R. S. Phillips, {\it Functional Analysis And Semi-Groups, rev. ed}. Amer. Math. Soc. Colloq. Publ., {\bf 31}. American Mathematical Society, Providence, RI, 1957. xii+808 pp.

%\bibitem{Kulikov} A. Kulikov (\url{https://mathoverflow.net/users/104330/aleksei-kulikov}), High dimensional Fekete's subadditive lemma: does $|\vec x_{n+m}|\leq |\vec x_n+\vec x_m|$ imply the convergence of $\{\vec x_n/n\}$?, URL (version: 2023-12-05): \url{https://mathoverflow.net/q/459785}.

\bibitem{Lueker} G. S. Lueker, Improved bounds on the average length of longest common subsequences. {\it J. ACM}, {\bf 56} (2009), Art. 17, 38 pp.

\bibitem{Ruzsa_Furedi} I. Z. Ruzsa and Z. F\"uredi, Nearly subadditive sequences. {\it Acta Math. Hungar.}, {\bf 161} (2020), 401–411.

\bibitem{MO} 
F. Shao (\url{https://mathoverflow.net/users/141451/feng}) and A. Kulikov (\url{https://mathoverflow.net/users/104330/aleksei-kulikov}), High dimensional Fekete's subadditive lemma: does $|\vec x_{n+m}|\leq |\vec x_n+\vec x_m|$ imply the convergence of $\{\vec x_n/n\}$?, URL (version: 2023-12-05): \url{https://mathoverflow.net/q/459773}.

\bibitem{Shur} A. M. Shur, Growth properties of power-free languages. {\it Comput. Sci. Rev.}, {\bf 6} (2012), 187–208. 
 
\bibitem{Steele} J. M. Steele, {\it Probability theory and combinatorial optimization}. CBMS-NSF Regional Conf. Ser. in Appl. Math., {\bf 69}, Society for Industrial and Applied Mathematics (SIAM), Philadelphia, PA, 1997. viii+159 pp.
 

\end{thebibliography}
\end{document}